\date{\today}
\definecolor{r}{rgb}{.9,0.1,.3}
\numberwithin{equation}{section}
\newtheorem{Proposition}{Proposition}[section]
\newtheorem{Definition}{Definition}[section]
\newtheorem{Theorem}{Theorem}[section]
\newtheorem{Lemma}{Lemma}[section]
\newtheorem{Remark}{Remark}[section]
\newtheorem{claim}{Claim}[section]
\newcommand{\R}{\mathbb{R}}
\newcommand{\ee}{\mathbf e}
\newcommand{\p}{\partial}
\newcommand{\dist}{\operatorname{dist}}
\newcommand{\Div}{\operatorname{div}}
\newcommand{\spt}{\operatorname{spt}}
\author[E. S. Gama]{Eddygledson S. Gama}
\address[Gama]{
  Departamento de Matem\'atica,
  Universidade Federal do Cear\'a, Bloco 914, Campus do Pici,
  Fortaleza, Cear\'a, 60455-760, Brazil.
}
\email{eddygledson@gmail.com}
\thanks{
E. S. Gama is supported by Coordena\c{c}\~ao de
Aperfei\c{c}oamento de Pessoal de N\'{i}vel Superior - Brasil CAPES/PDSE/88881.132464/2016-01.}
\title[Translating solitons in $\R^{n+1}$]{Translating solitons $C^1-$asymptotic to two half-hyperplanes}
\begin{document}
\maketitle
\begin{abstract} 
We prove that the hyperplanes parallel to ${\bf e}_{n+1}$ are the unique examples of translating solitons $C^1-$asymptotic to two half-hyperplanes outside a vertical cylinder in $\R^{n+1}$. This result generalizes previous result due to F. Mart\'in and the author in \cite{Gama-Martin}. 
\end{abstract}

\section{Introduction}
An oriented hypersurface $M$ in $\R^{n+1}$ is called a translating soliton (or translator) if 
$$M+ t \; \mathbf{e}_{n+1}$$
is a mean curvature flow. This is equivalent to 
\[
\vec{\bf{H}}={\bf e}_{n+1}^{\bot},
\]
where $\vec{\bf{H}}$ denotes the mean curvature vector field of $M$ and $\bot$ indicates the projection over the normal bundle of $M.$ Thus we have the scalar mean curvature satisfies:
\begin{equation}\label{TS-Eq.}
H=\langle N,{\bf e}_{n+1}\rangle,
\end{equation}
where $N$ indicates the unit normal along of $M.$ Recall that $H$ is just the trace of the second fundamental form of $M.$

In 1994, T. Ilmanen \cite{Ilmanen} showed that translating solitons are minimal hypersurfaces in $\R^{n+1}$ endowed with the conformal metric $g:=e^{\frac{2}{n}x_{n+1}}\langle\cdot.\cdot\rangle.$ From now on, we shall always assume that $\R^{n+1}$ is endowed with the metric $g.$ 

We will say that a translating soliton $M$ in $\R^{n+1}$ is {\bf complete} if $M$ is complete as hypersurface in $\R^{n+1}$ with the Euclidean metric.

This duality of being able to see translating solitons as minimal hypersurfaces was the key point that allowed to F. Mart\'in and the author  \cite{Gama-Martin} the use of  tools from the theory of varifolds to concluded that translating solitons $C^1$-asymptotic to two half-hyperplanes outside no vertical cylinder in $\R^{n+1}$ must be either a hyperplane parallel to $\ee_{n+1}$ or an element of the family of the tilted grim reaper cylinder. 

The family of the tilted grim reaper cylinders is the family of graphs given by the one-parameter family of functions \[f_\theta:\left(-\frac{\pi}{2\cos \theta},\frac{\pi}{2\cos \theta}\right)\times\R^{n-1}\to\R\] given by $f_\theta(x_1\ldots,x_n)=x_n\tan(\theta)-\sec^2(\theta)\log\cos(x_1\cos(\theta)), \; \theta\in[0,\pi/2)$. Besides the previous result, the method used in \cite{Gama-Martin} also implies, for a vertical cylinder and dimension $n < 7$, that the hypersurface $M$ must coincide with an hyperplane parallel to $\ee_{n+1}$. Thus it remained open to know whether the same type of result were true for any dimension, i. e., if the hyperplane parallel to $\ee_{n+1}$ are the unique examples that outside a vertical cylinder are $C^1-$asymptotic to two half-hyperplanes. 

In this paper, we will prove that variation of the method used in \cite{Gama-Martin}, together with the result about connectness of the regular set of a stationary varifold due to Ilmamen in \cite{Ilmanen-maximum} and a sharp version of the maximum principle due to N. Wickramasekera in \cite{Wickramasekera} allow us to conclude that the hyperplanes parallel to $\ee_{n+1}$ are the unique examples of translating solitons $C^1-$asymptotic to two half-hyperplanes outside a vertical cylinder in $\R^{n+1}$ for all dimension.

It is important to point out here that the main theorem  of this paper and the main theorem obtained in \cite{MPGSHS15} (for dimension three) and in \cite{Gama-Martin} (for arbitrary dimension) give a complete characterization of all the translators which are $C^1-$asymptotic to two half-hyperplanes outside a cylinder in $\R^{n+1}$, up to rotations fixing $\ee_{n+1}$ and translations. More precisely, the next result holds for all dimensions. Here ${\bf u}_\theta=-\sin (\theta) \cdot  \ee_n+\cos(\theta) \cdot \ee_{n+1}.$
\begin{Theorem} \label{th:41}
Let $M\hookrightarrow \mathbb{R}^{n+1}$ be a complete, connected, properly embedded translating soliton and consider the cylinder $\textstyle{\mathcal{C}_\theta(r):=\{x\in \mathbb{R}^{n+1} \; : \; \langle x,\ee_{1}\rangle^2+\langle {\bf u}_{\theta},x\rangle^2 \leq r^2\}},$ where $r>0.$ Assume that $M$ is $C^{1}$-asymptotic to two half-hyperplanes outside $\mathcal{C}_\theta(r)$.
\begin{itemize}
\item[i.] If $\theta\in[0,\pi/2)$, then we have one, and only one, of these two possibilities:
\begin{enumerate}
\item[a.] Both half-hyperplanes are contained in the same hyperplane $\Pi$ parallel to $\ee_{n+1}$ and $M$ coincides with $\Pi$;
\item[b.] The half-hyperplanes are included in different parallel hyperplanes and $M$ coincides with a vertical translation of the tilted grim reaper cylinder associated to $\theta$.
\end{enumerate}
\item[ii.]If $\theta=\pi/2$, then $M$ coincides with a hyperplane parallel to $\ee_{n+1}$.
\end{itemize}
\end{Theorem}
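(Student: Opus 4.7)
\emph{Proof plan.}
Case (i) is the main theorem of \cite{Gama-Martin}, so the new content lies in case (ii), the vertical cylinder case ($\theta = \pi/2$). The strategy is first to reduce to the situation where the two asymptotic half-hyperplanes combine into a single vertical hyperplane, and then to conclude $M = \Pi$ via a maximum principle.

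Since $M$ satisfies the translator equation, $|H| = |\langle N, \ee_{n+1}\rangle| \leq 1$; passing to the asymptotic limit along $\Pi_i^+$ then forces the constant normal of $\Pi_i^+$ to satisfy $\langle N_{\Pi_i^+}, \ee_{n+1}\rangle = 0$, so each $\Pi_i^+$ is vertical and they share a common $(n-1)$-dimensional boundary along the vertical axis $L$ of $\mathcal{C}_{\pi/2}(r)$. We then perform a blowdown $M_\lambda := M/\lambda$ as $\lambda \to \infty$. The $C^{1}$-asymptotic hypothesis identifies the support of the limit varifold $V_\infty$ as $\Pi_1^+ \cup \Pi_2^+$ with unit multiplicity, and the verticality of the $\Pi_i^+$ forces $\langle N, \ee_{n+1}\rangle \to 0$ in the asymptotic region; this decay drives the Euclidean first variation of $M_\lambda$ to zero, so $V_\infty$ is a stationary integral varifold in the Euclidean metric. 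The balancing condition at the codimension-two edge $L$ then reads $\nu_1 + \nu_2 = 0$ for the unit conormals into each $\Pi_i^+$, which for two sheets of unit multiplicity forces $\Pi_1^+ \cup \Pi_2^+$ to extend into a single full vertical hyperplane $\Pi$. In particular alternative (b) is ruled out in case (ii), and $M$ is $C^{1}$-asymptotic to $\Pi$ outside the cylinder.

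It remains to show that $M = \Pi$. Both $M$ and $\Pi$ are smooth minimal hypersurfaces in the Ilmanen metric, so the sum $V := M + \Pi$ is a stationary integral $n$-varifold. Applying Ilmanen's connectedness of the regular set of a stationary varifold \cite{Ilmanen-maximum} together with Wickramasekera's sharp maximum principle \cite{Wickramasekera} to $V$, one concludes $\spt(M) \subseteq \Pi$, whence $M = \Pi$ by completeness and connectedness. The main obstacle is verifying the stationarity of the blowdown $V_\infty$: the translator's mean curvature does not decay under pure Euclidean rescaling, so the verticality established in the first step must be exploited to drive the first variation to zero. The use of Wickramasekera's sharp maximum principle then bypasses the dimensional restriction ($n < 7$) of the earlier argument in \cite{Gama-Martin}.
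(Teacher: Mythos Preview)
Your outline diverges sharply from the paper's argument and has two genuine gaps.

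\textbf{The blowdown step is not justified.} You correctly flag this as the ``main obstacle'' but do not actually resolve it. Under $M_\lambda = \lambda^{-1}M$ the mean curvature rescales as $H_{M_\lambda}(p)=\lambda\,H_M(\lambda p)=\lambda\,\langle N_M(\lambda p),\ee_{n+1}\rangle$, so to make the first variation vanish you need a \emph{quantitative} decay of $\langle N,\ee_{n+1}\rangle$ along the wings that beats the factor $\lambda$, together with control of the contribution from $M\cap\mathcal{C}_{\pi/2}(r)$ (which, after rescaling, concentrates on a codimension-two set but carries a diverging weight). The $C^1$-asymptotic hypothesis gives no rate, and inside the cylinder you have no information at all. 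You also do not supply the area bounds needed for the blowdown to converge as varifolds. Without these ingredients the balancing argument cannot even be started.

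\textbf{The final step does not conclude.} Forming $V=M+\Pi$ and invoking \cite{Ilmanen-maximum} and \cite{Wickramasekera} does not give $M\subset\Pi$: if $M$ and $\Pi$ are disjoint smooth hypersurfaces then $\operatorname{reg}V$ is simply their union, and connectedness of the regular set fails precisely because $\spt V$ itself is disconnected. Those tools yield a dichotomy only once you have produced a common support point, which is exactly what a sliding/compactness argument is needed for.

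For comparison, the paper does not rescale at all. It first shows the two half-hyperplanes are parallel (maximum principle plus the compactness Lemma~\ref{remarkcompctness} for \emph{translates} of $M$), then assumes $\Pi_1\neq\Pi_2$, traps $M$ in the slab, and runs a moving-planes type sliding of $M$ against a translate of itself. The limits that arise are limits of the \emph{wings}, which are graphs over half-hyperplanes and hence stable with locally bounded area by Proposition~\ref{homology-general.}; this is what puts them inside the scope of \cite{Wickramasekera}[Theorem~18.1], after which \cite{Ilmanen-maximum}[Theorem~A(ii)] and \cite{Wickramasekera}[Theorem~19.1(a)] give the contradiction. The role of stability of graphs here is essential and has no counterpart in your blowdown scheme.
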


Notice that this theorem is sharp in several senses.  If we increase the number of half-hyperplane then there are a lot of counterexamples. The cylinder over the pitchfork translator obtained recently by D. Hoffman, F. Mart\'in and B. White in \cite{Hoffman-New}  is an example of a complete, connected, properly embedded translating soliton which is $C^1-$asymptotic to 4 half-hyperplanes outside a cylinder in $\R^{n+1}$. In general, the cylinder over the examples obtained by X. Nguyen in \cite{Nguyen-1},\cite{Nguyen-2} and \cite{Nguyen-3} give similar examples  which are $C^1-$asymptotic to $2 k$ half-hyperplanes outside a cylinder, for any $k \geq 2$. The examples given by Nguyen have infinity topology, however the pitchfork translator is simply connected. 

We would like to point that the number of asymptotic half-hyperplanes cannot be odd, because each loop in $\R^{n+1}$ must intersect each properly embedded hypersurface in $\R^{n+1}$ at a even number of points (counting their multiplicity), so whenever this example existed, we could find a loop so that it would intersect the example at an exactly odd number of points. 

On the other hand, the hypothesis about the asymptotic behaviour outside a cylinder is also necessary as it is shown by the examples  obtained by Hoffman, Ilmanen, Mart\'in and White in \cite{Hoffman}.

\section*{Acknowledgements}
I would like to thank Francisco Mart\'in for valuable conversations and suggestions about this work. I would like to thank the referee for his valuable suggestions about the manuscript.

\section{Preliminaries}\label{Preliminaries}
Let $\Pi$ be a hyperplane in $\R^{n+1}$ and $\nu$ an unit normal along $\Pi$ with respect to the Euclidean metric in $\R^{n+1}$. Suppose that $u:\Omega\to\R$ is a smooth function. The set ${\rm Graph}^\Pi[u]$ defined by
\[{\rm Graph}^{\Pi}[u]:=\{x+u(x)\nu\colon x\in\overline{\Omega}\},\] 
is called the graph of $u$. Notice that we can orient ${\rm Graph}^{\Pi}[u]$ by the unit normal
\[
N=\frac{1}{W}(\nu- D u),
\]
where $Du$ indicates the gradient of u on $\Pi$ with respect to the Euclidean metric and $W^2=1+|D u|^2.$

With this orientation for ${\rm Graph}^{\Pi}[u]$ we have that $x\mapsto\langle N,\nu\rangle$ is a positive Jacobi field on ${\rm Graph}^\Pi[u]$ of the Jacobi operator associated to the metric $g$ in $\R^{n+1}$. Therefore applying the similar strategy used by Shariyari in \cite{SHAHRIYARI15} we shall conclude that all the graphs are stable in $\R^{n+1}$ with the metric $g$. 

Actually, these graphs satisfy a stronger property than stability. Using the method developed by Solomon in \cite{Solomon} (see also \cite{Gama}) we shall conclude that graphs are area-minimizing inside the cylinder over their domain. More precisely, we have the next proposition. Here $\mathcal{A}_g[\Sigma]$ indicates the area of the hypersurface $\Sigma$ in $\R^{n+1}$ with the Ilmanen's metric $g.$
\begin{Proposition}\label{homology-general.}
Let $u:\overline{\Omega}\to\R$ a smooth function over a domain $\overline{\Omega}\subset\Pi$, where $\Pi$ is a hyperplane in $\R^{n+1}$. Suppose that ${\rm Graph}^{\Pi}[u]$ is a translating graph in $\R^{n+1}.$ Assume that $\Sigma$ is any another hypersurface inside the cylinder $\mathcal{C}(\Omega)=\{x+s\nu\colon x\in\overline{\Omega}\ {\rm and}\ s\in\R\}$ so that $\p \Sigma=\p {\rm Graph}^{\Pi}[u].$ Then we have
\[
\mathcal{A}_g[{\rm Graph}^{\Pi}[u]]\leq \mathcal{A}_g[\Sigma].
\]
\end{Proposition}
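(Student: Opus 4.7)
The plan is to adapt the calibration method of Solomon \cite{Solomon} to the Ilmanen metric $g$. The goal is to construct on the cylinder $\mathcal{C}(\Omega)$ a vector field $X$ enjoying three properties: (a) $X$ restricts to the $g$-unit normal along ${\rm Graph}^{\Pi}[u]$; (b) $|X|_g \equiv 1$ on $\mathcal{C}(\Omega)$; and (c) $\Div_g X = 0$. Once such an $X$ is available, $\omega := \iota_X\, dV_g$ is a closed $n$-form whose $g$-comass equals one and whose restriction to the graph coincides with the $g$-area form; the stated inequality then follows from the usual calibration comparison.

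To construct $X$, I extend the Euclidean unit normal of ${\rm Graph}^{\Pi}[u]$ to all of $\mathcal{C}(\Omega)$ by translation along $\nu$: for $p = x + s\nu$ with $x \in \overline{\Omega}$ and $s \in \R$, set
\[
\tilde{N}(p) := \frac{1}{W(x)}\bigl(\nu - Du(x)\bigr), \qquad X(p) := e^{-x_{n+1}/n}\,\tilde{N}(p).
\]
Since $|\tilde{N}|_0 \equiv 1$ on $\mathcal{C}(\Omega)$ and the metric $g$ rescales Euclidean norms by the factor $e^{x_{n+1}/n}$, property (b) is immediate; on the graph itself $\tilde{N}=N$, so $X = e^{-x_{n+1}/n}N$ coincides with the $g$-unit normal, giving (a).

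The crux is (c). Writing $\phi := x_{n+1}/n$, the conformal-change formula for the divergence gives $\Div_g(e^{-\phi}\tilde{N}) = e^{-\phi}\bigl(\Div_0 \tilde{N} + n\,\tilde{N}(\phi)\bigr)$. Since $\tilde{N}$ is constant along $\nu$, its Euclidean divergence reduces to a divergence on $\Pi$, and a direct calculation yields $\Div_0 \tilde{N}(x + s\nu) = -\Div_\Pi(Du/W)(x) = -H(x)$, where $H$ denotes the scalar mean curvature of the graph. On the other hand, $n\,\tilde{N}(\phi) = \langle \tilde{N}, \ee_{n+1}\rangle = \langle N, \ee_{n+1}\rangle$, which by the translating soliton equation \eqref{TS-Eq.} equals $H$. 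Hence $\Div_g X = e^{-\phi}(-H + H) = 0$, proving (c).

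Finally, let $\Sigma \subset \mathcal{C}(\Omega)$ be any competitor with $\partial \Sigma = \partial\,{\rm Graph}^{\Pi}[u]$. Because $\omega$ is closed, Stokes' theorem applied to the $(n+1)$-chain enclosed between $\Sigma$ and ${\rm Graph}^{\Pi}[u]$ (which exists because $\mathcal{C}(\Omega)$ deformation-retracts onto $\overline{\Omega}\subset \Pi$ and so has vanishing top-dimensional homology) produces the equality $\int_{{\rm Graph}^{\Pi}[u]}\omega = \int_\Sigma \omega$, while the comass bound $|\omega|_g \leq 1$ forces $\int_\Sigma \omega \leq \mathcal{A}_g[\Sigma]$. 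Combining these gives $\mathcal{A}_g[{\rm Graph}^{\Pi}[u]] \leq \mathcal{A}_g[\Sigma]$. I expect the main obstacle to be the divergence computation in (c), which collapses precisely because of \eqref{TS-Eq.}; after that, the remainder is standard calibration machinery.
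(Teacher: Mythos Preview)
Your argument is correct and is essentially the paper's own proof: both extend the Euclidean unit normal $\tilde N=\frac{1}{W}(\nu-Du)$ by $\nu$-translation and use the translator equation \eqref{TS-Eq.} to see that the resulting calibrating $n$-form $\omega=e^{x_{n+1}}\iota_{\tilde N}\,dV_0$ is closed. The only cosmetic difference is packaging---the paper writes $X=e^{x_{n+1}}\tilde N$ and checks $\Div_{\R^{n+1}}X=0$ with the Euclidean divergence theorem (first for $\Sigma$ on one side, then by decomposition), whereas you normalize to $|X|_g=1$ and invoke Stokes for the closed form $\iota_X\,dV_g$ directly; since $\iota_X\,dV_g=\iota_{e^{x_{n+1}}\tilde N}\,dV_0$, these are the same object.
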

\begin{proof}
Let $N=\frac{1}{W}(\nu-D u)$ be the unit normal along ${\rm Graph}^{\Pi}[u].$ Suppose first that $\Sigma$ is a hypersurface in $\mathcal{C}(\Omega)$ that lies oneside of ${\rm Graph}^{\Pi}[u]$ and let $U$ be the domain in $\mathcal{C}(\Omega)$ limited by $\Sigma$ and ${\rm Graph}^{\Pi}[u]$. Consider the vector field $X$ in $\mathcal{C}(\Omega)$ obtained from the unit normal $N$ of ${\rm Graph}^{\Pi}[u]$ by parallel transport across the line of the flow of $\nu$. That is, $X$ is given by 
\[
X(p,s)=\frac{e^{x_{n+1}}}{W}(\nu-D u)\ {\rm for\ all}\ (p,s)\in\mathcal{C}(\Omega).
\]
Using that ${\rm Graph}^{\Pi}[u]$ satisfies \eqref{TS-Eq.} in $\R^{n+1}$ one gets
\begin{eqnarray*}
\Div_{\R^{n+1}} X=0.
\end{eqnarray*}
Thus the divergence theorem applying to $U$ and $X$ in $\R^{n+1}$ with to the Euclidean metric implies, up to a sign, that
\begin{eqnarray*}
0&=&\int_{{\rm Graph}^{\Pi}[u]}\langle X,N\rangle{\rm d}\mu_{{\rm Graph}^{\Pi}[u]}-\int_{\Sigma}\langle X,N_{\Sigma}\rangle{\rm d}\mu_\Sigma\\
&\geq&\int_{{\rm Graph}^{\Pi}[u]}e^{x_{n+1}}{\rm d}\mu_{{\rm Graph}^{\Pi}[u]}-\int_{\Sigma}e^{x_{n+1}}{\rm d}\mu_\Sigma=\mathcal{A}_g[{\rm Graph}^{\Pi}[u]]-\mathcal{A}_g[\Sigma].
\end{eqnarray*}
This completes the proof when $\Sigma$ lies oneside of ${\rm Graph}^{\Pi}[u]$. The general case can be obtained by breaking the hypersurface $\Sigma$ into many parts so that each part lies oneside of ${\rm Graph}^{\Pi}[u].$ 
\end{proof}
\begin{Remark}
This Proposition also was obtained by Xin in \cite{Xin} for $\nu=\ee_{n+1}.$
\end{Remark}

Next we define what means a hypersurface be $C^1-$asymptotic to a half-hyperplane.
\begin{Definition}\label{Def. Asymptotic}
Let $\mathcal{H}$ a open half-hyperplane in $\mathbb{R}^{n+1}$ and $w$ the unit inward pointing normal of $\partial \mathcal{H}$. For a fixed positive number $\delta$, denote by $\mathcal{H}(\delta)$ the set given by
\begin{equation*}
\mathcal{H}(\delta):=\left\{p+tw:p\in \partial \mathcal{H}\ \operatorname{and}\ t>\delta\right\}.
\end{equation*}
We say that a smooth hypersurface $M$ is $C^{k}-$asymptotic to the open half-hyperplane $\mathcal{H}$ if $M$ can be represented as the graph of a $C^{k}-$ function $\varphi:\mathcal{H}\longrightarrow \mathbb{R}$ such that for every $\epsilon>0$, there exists $\delta>0$, so that for any $j\in\{1,2,\ldots,k\}$ it holds
\begin{equation*}
\sup_{\mathcal{H}(\delta)}|\varphi|<\epsilon\ \operatorname{and}\ \sup_{\mathcal{H}(\delta)}|D^j\varphi|<\epsilon.
\end{equation*}
We say that a smooth hypersurface $M$ is $C^{k}-$asymptotic outside a cylinder to two half-hyperplanes $\mathcal{H}_{1}$ and $\mathcal{H}_{2}$ provided there exists a solid cylinder $\mathcal{C}$ such that:
\begin{itemize}
\item[i.]The solid cylinder $\mathcal{C}$ contains the boundaries of the half-hyperplane $\mathcal{H}_{1}$ and $\mathcal{H}_{2}$,
\item[ii.]$M\setminus \mathcal{C}$ consists of two connected components $M_{1}$ and $M_{2}$ that are  $C^{k}-$asymptotic to $\mathcal{H}_{1}$ and $\mathcal{H}_{2}$, respectively.
\end{itemize}
\end{Definition}

\begin{Remark}
Observe that the solid cylinders in $\R^{n+1}$ in the definition are those isometric to $D(r)\times\R^{n-1},$ where $D(r)$ indicates the disk of radius $r$ in $\R^2$.
\end{Remark}

We need some notation from the theory of varifolds (see \cite{Simon} for more information about this subject). Let $V$ be an n-dimensional varifold in $U$, where $U$ is an open subset of $\R^{n+1}.$
\begin{Definition}
We define ${\rm reg} V$ as the set of all the points $p\in U\cap\spt V$ so that there exist a open ball $B_\epsilon(p)\subset U$ such that $B_\epsilon(p)\cap\spt V$ is hypersurface of class $C^1$ in $B_\epsilon(p)$ without boundary. The set ${\rm reg} V$ is called the regular set of $V$. The complement of ${\rm reg} V$ in $U$, denoted by ${\rm sing} V:=U\setminus{\rm reg} V$, is called the singular set of $V.$
\end{Definition}

\begin{Definition}
We say that an $n-$dimensional varifold $V$ is connected provided that $\spt V$ is a connected subset in $U.$
\end{Definition}

The following compactness result (in the class varifolds) was proven in \cite{Gama-Martin}. 
\begin{Lemma}\label{remarkcompctness}
Let $M^n\hookrightarrow \mathbb{R}^{n+1}$ be a complete, connected, properly embedded translating soliton and $\displaystyle{\mathcal{C}(r):=\{x\in \mathbb{R}^{n+1} \; : \; \langle x,\ee_{1}\rangle^2+\langle x,  \ee_{n}\rangle^2 \leq r^2\}},$ for $r>0.$ Assume that $M$ is $C^{1}$-asymptotic to two half-hyperplanes outside $\mathcal{C}(r)$. Suppose that $\left\{b_{i}\right\}_{i\in \mathbb{N}}$ is a sequence in $[\ee_1,\ee_{n}]^\perp$ and let $\left\{M_{i}\right\}_{i\in \mathbb{N}}$ be a sequence of hypersurfaces given by $M_{i}:=M+b_{i}.$ Then there exist a connected stationary integral varifold $M_\infty$ and a subsequence $\{M_{i_k}\}\subset\{M_i\}$ so that
\begin{itemize}
\item[$($i$)$]$M_{i_k}\stackrel{*}{\rightharpoonup} M_{\infty}$ in $\R^{n+1}$;
\item[$($ii$)$]${\rm sing}\ M_\infty$ satisfies $\mathcal{H}^{n-7+\beta}({\rm sing}\ M_{\infty}\cap (\R^{n+1}\setminus \mathcal{C}(r)))=0$ for all $\beta>0$ if $n\geq7$, ${\rm sing}\ M_{\infty}\cap (\R^{n+1}\setminus \mathcal{C}(r))$  is discrete if $n=7$ and ${\rm sing}\ M_{\infty}\cap (\R^{n+1}\setminus \mathcal{C}(r))=\varnothing$ if $1\leq n\leq6$;
\item[$($iii$)$] $M_{i_k}\to\spt M_{\infty}$ in $\R^{n+1}\setminus (\mathcal{C}(r)\cup{\rm sing}\ M_{\infty}).$
\end{itemize}
\end{Lemma}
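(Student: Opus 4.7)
The plan is to interpret each translated hypersurface $M_i = M + b_i$ as a stationary integral $n$-varifold in $(\R^{n+1}, g)$ and apply Allard's compactness theorem, followed by the sharp regularity theory for stable codimension-one varifolds. As a first observation, $\mathcal{C}(r)$ is invariant under translations by vectors in $[\ee_1, \ee_n]^\perp$, so each $M_i$ is itself a properly embedded translator, and outside the same cylinder $\mathcal{C}(r)$ it is $C^1$-asymptotic to the two translated half-hyperplanes $\mathcal{H}_j + b_i$, $j=1,2$.

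The first step is to establish uniform local area bounds. For a compact $K\subset \R^{n+1}$ and a slightly enlarged cylinder $\mathcal{C}(r')$ with $r'>r$, the asymptotic hypothesis controls $\mathcal{H}^n_g(M_i \cap K \setminus \mathcal{C}(r'))$ uniformly by roughly twice the area of a hyperplane piece in $K$. Inside $\mathcal{C}(r')\cap K$, I would compare with a surface obtained by capping $M_i$ along $\p K \cap \p\mathcal{C}(r')$: the calibration argument from Proposition \ref{homology-general.} applied to the two asymptotic graph pieces of $M_i$ bounds the area by that of this capped competitor, which is in turn uniformly controlled thanks to the $C^1$-asymptotic estimates on $\p\mathcal{C}(r')$. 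With these uniform area bounds and the stationarity of each $M_i$ in $(\R^{n+1}, g)$, Allard's compactness theorem for stationary integral varifolds yields a subsequence $M_{i_k} \stackrel{*}{\rightharpoonup} M_\infty$ with $M_\infty$ stationary integral, establishing (i) except for connectedness.

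Next I would analyse the singular set. Outside $\mathcal{C}(r)$, each $M_i$ splits into two graph components that are area-minimizing in their cylinders by Proposition \ref{homology-general.}. Hence the restriction of $M_\infty$ to $\R^{n+1}\setminus\mathcal{C}(r)$ is a weak limit of stable (indeed locally area-minimizing) codimension-one integral varifolds and is therefore itself a stable stationary integral $n$-varifold there. Applying Wickramasekera's sharp regularity theorem \cite{Wickramasekera} produces exactly the dimension bounds in (ii): $\mathcal{H}^{n-7+\beta}({\rm sing}\ M_\infty \cap (\R^{n+1}\setminus\mathcal{C}(r)))=0$ for $n\geq 7$, discreteness at $n=7$, and full regularity for $n\leq 6$. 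Claim (iii) then follows by applying Allard's regularity theorem at each regular point of $M_\infty$ outside $\mathcal{C}(r)$. Connectedness of $\spt M_\infty$ is inherited from the connectedness of each $M_i$ via the persistence of the asymptotic picture, combined with Ilmanen's connectedness result for the regular set of a stationary integral varifold \cite{Ilmanen-maximum}.

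The hardest step will be the uniform area bound inside $\mathcal{C}(r)$ when $|b_i|$ is large, since there one loses direct geometric control over $M$ at points far from the origin inside the cylinder. The essential ingredient that closes the estimate is the combination of the minimizing property of the asymptotic graphs (Proposition \ref{homology-general.}) with the uniform $C^1$-asymptotic behaviour on $\p\mathcal{C}(r')$, which together supply the boundary competitor needed to bound the area in the interior.
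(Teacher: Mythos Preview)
The paper does not prove this lemma; it merely states the result and attributes it to \cite{Gama-Martin}, so there is no in-paper argument to compare against. Your outline---uniform local mass bounds, Allard compactness, then regularity of the limit outside $\mathcal{C}(r)$ via the stability/area-minimizing property of the graph wings---is the natural strategy and is presumably what \cite{Gama-Martin} does.

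Two remarks on your sketch. First, the area bound inside the cylinder is indeed the crux, but your capping argument as written does not close: Proposition~\ref{homology-general.} compares a \emph{translating graph} against competitors with the same boundary, whereas the piece $M_i\cap\mathcal{C}(r')\cap K$ is not a graph and is not known to be area-minimizing. A cleaner route is to note that each $M_i$ has $|H|\le 1$ in the Euclidean metric, so Allard's monotonicity for varifolds with bounded generalized mean curvature bounds $\mathcal{H}^n(M_i\cap B_\rho(p))$ at a fixed scale once you control it at one large scale; the large-scale control then comes from the two $C^1$-graph wings and the fact that $\mathcal{C}(r)\cap B_R(p)$ has small relative $n$-volume for $R\gg r$. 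Alternatively one can compare $M_i\cap B_R$ against a portion of $\partial B_R$ using that $M_i$ separates $\R^{n+1}$ and is stationary in $g$, but this also needs a little care.

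Second, your invocation of Ilmanen's theorem for the connectedness of $\spt M_\infty$ is slightly misplaced: \cite{Ilmanen-maximum} concerns connectedness of the \emph{regular} set, not of the support. Connectedness of $\spt M_\infty$ follows more directly from the connectedness of each $M_i$ together with the controlled asymptotic picture outside $\mathcal{C}(r)$, which prevents the limit from separating into components; this is the content of \cite{Gama-Martin}[Lemma~3.1], as the paper itself notes when it uses this lemma later.
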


\section{Main theorem}\label{Main theorem}\label{Main}
Now we are going to see how we can get the main result from the results in Section \ref{Preliminaries}.
\begin{Theorem}\label{limitcase}
Let $M^n\hookrightarrow \mathbb{R}^{n+1}$ be a complete, connected, properly embedded translating soliton and $\displaystyle{\mathcal{C}(r):=\{x\in \mathbb{R}^{n+1} \; : \; \langle x,\ee_{1}\rangle^2+\langle x,  \ee_{n}\rangle^2 \leq r^2\}},$ for $r>0.$ Assume that $M$ is $C^{1}$-asymptotic to two half-hyperplanes outside $\mathcal{C}(r)$. Then $M$ must coincide with a hyperplane parallel to $\ee_{n+1}.$
\end{Theorem}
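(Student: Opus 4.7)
The plan is to adapt the argument of \cite{Gama-Martin} using Ilmanen's connectedness of the regular set and Wickramasekera's sharp maximum principle to handle singularities in high dimensions. First, I argue that both hyperplanes $\Pi_j\supseteq\mathcal{H}_j$ must be parallel to $\ee_{n+1}$: writing $M\setminus\mathcal{C}(r)$ as two graphs $\varphi_j$ over $\mathcal{H}_j$, the translator equation \eqref{TS-Eq.} for $\varphi_j$ is a quasilinear elliptic PDE with $\varphi_j,|D\varphi_j|\to 0$ at infinity, and Schauder estimates upgrade this to $|D^2\varphi_j|\to 0$; passing to the limit in \eqref{TS-Eq.} yields $\langle\nu_j,\ee_{n+1}\rangle=0$. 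The condition $\partial\mathcal{H}_j\subset\mathcal{C}(r)$ additionally forces $\partial\mathcal{H}_j$ to be a translate of $[\ee_1,\ee_n]^\perp$, so each $\Pi_j$ is a vertical hyperplane.

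Next, I apply Lemma~\ref{remarkcompctness} to the sequence $b_i=i\,w$ for a fixed unit vector $w\in[\ee_1,\ee_n]^\perp$, obtaining a subsequential stationary integral varifold limit $M_\infty$ with the singular-set bounds of Lemma~\ref{remarkcompctness}(ii). Since $b_{i+1}-b_i=w$ is constant, a standard diagonal argument shows $M_\infty$ is invariant under $w$-translation. By Lemma~\ref{remarkcompctness}(iii) combined with the $C^1$-asymptotic hypothesis, $\spt M_\infty$ agrees with $\Pi_1\cup\Pi_2$ outside $\mathcal{C}(r)$; since $\Pi_j\setminus\mathcal{C}(r)$ is dense in $\Pi_j$ for $n\ge 3$ (the remaining case $n=2$ being already covered by \cite{Gama-Martin}), closure yields $\Pi_1\cup\Pi_2\subseteq\spt M_\infty$.

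The crux is to show $\Pi_1=\Pi_2$. Since $M_\infty$ is connected, Ilmanen's theorem~\cite{Ilmanen-maximum} implies that $\operatorname{reg}(M_\infty)$ is connected. Both $\Pi_j$ are themselves translators, hence minimal in the Ilmanen metric $g$. Suppose for contradiction $\Pi_1\ne\Pi_2$; then one can identify a half-space $H^+$ bounded by $\Pi_1$ such that $\spt M_\infty$ locally lies in $\overline{H^+}$ near some regular point of $\spt M_\infty\cap\Pi_1$. Wickramasekera's sharp maximum principle~\cite{Wickramasekera}, applicable because the singular set of $M_\infty$ meets the sharp dimensional bound of Lemma~\ref{remarkcompctness}(ii), then forces $M_\infty$ to contain $\Pi_1$ in a neighborhood; by connectedness of $\operatorname{reg}(M_\infty)$ this propagates to give $\spt M_\infty\subseteq\Pi_1$ on the regular component, contradicting $\Pi_2\subseteq\spt M_\infty$ with $\Pi_1\ne\Pi_2$. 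Hence $\Pi_1=\Pi_2=:\Pi$, and the constancy theorem yields $M_\infty=\Pi$ with multiplicity one.

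Finally, Lemma~\ref{remarkcompctness}(iii) provides smooth convergence $M+b_i\to\Pi$ off the singular set, so $M$ is $C^1$-close to $\Pi$ on large regions outside $\mathcal{C}(r)$. A moving-planes argument---translating $\Pi$ normal to itself until first contact with $M$ and then applying the strong maximum principle for minimal hypersurfaces in $(\R^{n+1},g)$---yields $M=\Pi$, using the connectedness and properness of $M$. The main obstacle lies in the third step, namely justifying Wickramasekera's maximum principle for $M_\infty$ despite a singular set of possibly non-trivial Hausdorff dimension up to $n-7$; this is precisely the technical improvement that extends the argument beyond the $n<7$ case treated in \cite{Gama-Martin}.
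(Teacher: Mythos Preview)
Your outline diverges substantially from the paper's argument, and it contains a genuine gap in Step~2 that undermines Steps~3--4.

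The central problem is the assertion that ``$\spt M_\infty$ agrees with $\Pi_1\cup\Pi_2$ outside $\mathcal{C}(r)$'' after translating by $b_i=i\,w$ with $w\in[\ee_1,\ee_n]^\perp$. The $C^1$-asymptotic hypothesis (Definition~\ref{Def. Asymptotic}) only forces $\varphi_j\to 0$ as one moves in the direction $w_j$, the inward normal to $\partial\mathcal{H}_j$ \emph{inside} $\Pi_j$; there is no decay imposed in directions parallel to $\partial\mathcal{H}_j$. Since $\partial\mathcal{H}_j$ is a translate of $[\ee_1,\ee_n]^\perp$, your vectors $b_i$ lie precisely in those uncontrolled directions, so the translated graphs $\varphi_j(\cdot-b_i)$ need not converge to~$0$. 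Consequently you cannot conclude $\Pi_j\subset\spt M_\infty$, and without that your Step~3 has nothing to work with. (The ``diagonal argument'' for $w$-invariance is also suspect: subsequential limits along $\{b_i\}$ and $\{b_i+w\}$ need not coincide.) A secondary issue is that Lemma~\ref{remarkcompctness}(ii) bounds $\operatorname{sing}M_\infty$ only in $\R^{n+1}\setminus\mathcal{C}(r)$, so invoking Wickramasekera globally for $M_\infty$ is not justified; in fact, if one \emph{did} know $\spt M_\infty$ lies in the slab and touches $\Pi_1$, White's solid maximum principle \cite{White} (used already in the paper's Claims) would suffice and no stability would be needed.

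The paper avoids all of this by a different mechanism. After establishing that $M$ lies strictly inside the slab and that $\dist(M\cap\mathcal{C}(s),\Pi_j)>0$ for every $s\ge r$, it runs a sliding argument with \emph{two copies of $M$}: translate $M$ by $s_0\nu+t_0 w_1$ so the copies are disjoint, then decrease $s$ until $\dist=0$ with no interior contact. The resulting minimizing sequences live on the \emph{wings} $M^1,M^2$, which are graphs over half-hyperplanes, hence stable with locally bounded area by Proposition~\ref{homology-general.}. It is this stability that allows the use of Wickramasekera's regularity \cite{Wickramasekera}[Theorem~18.1] to get limits $M_\infty,\widehat M_\infty$ with $\mathcal{H}^{n-7}$-null singular sets, then Ilmanen's connectedness \cite{Ilmanen-maximum} to force $\spt M_\infty\cap\spt\widehat M_\infty\subset\operatorname{sing}$, and finally \cite{Wickramasekera}[Theorem~19.1] to derive a contradiction. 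The point is that the high-dimensional regularity input is applied to \emph{stable graphical pieces}, not to an uncontrolled translation limit of $M$ itself.
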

\begin{proof} 
We start by proving the following.
\begin{claim}\label{parallel}
The half-hyperplanes  $\mathcal{H}_{1}$ and $\mathcal{H}_{2}$ are parallel. 
\end{claim}
\begin{proof}[Proof of the Claim \ref{parallel}]
Assume this is not true, then we could take a hyperplane parallel to $\ee_{n+1}$, $\tilde{\Pi}$, such that it does not intersect $M$ and such that the normal vector $v$ to $\tilde{\Pi}$ is not perpendicular to $w_{1}$ and $w_{2}$, where $w_i$ denotes the unit inward pointing normal of $\p \mathcal{H}_i$. Next we translate $\tilde{\Pi}$ by $t_0 \in \R$ in the direction of $v$ until we get a hyperplane $\tilde{\Pi}_{t_0}:=\tilde{\Pi}+t_{0}v$ in such a way that either $\tilde{\Pi}_{t_0}$ and $M$ have a first point of contact or $\dist \left(\tilde{\Pi}_{t_0},M\right)=0$ and $\tilde{\Pi}_{t_0}\cap M=\varnothing.$ The first case is not possible by the maximum principle. The second case implies that there exists a sequence $\{p_i=(p_1^i,\ldots,p_{n+1}^i)\}\in M$ so that $\lim_i\dist(p_i,\tilde{\Pi})=0$, $\langle p_i,\ee_1\rangle\to a_1$ and $\langle p_i,\ee_n\rangle\to a_n$. In particular, we also have $(a_1,0,\ldots,0,a_n,0)\in\tilde{\Pi}$. Consider the sequence of hypersurface 
\(
\{M_i:=M-(0,p_2^i,\ldots,p_{n-2}^i,0,p_{n+1}^i)\}.
\)
By Lemma \ref{remarkcompctness}, after passing to a subsequence, $M_i\stackrel{*}{\rightharpoonup} M_\infty$, where  $M_\infty$ is a connected stationary integral varifold in $\R^{n+1}$ and $(a_1,0,\ldots,0,a_n,0)\in\spt M_\infty$ by \cite{Simon}[Corollary 17.8]. Thus by \cite{White}[Theorem 4] we conclude $\spt M_\infty=\tilde{\Pi}$, which is impossible because  of the behaviour of $\spt M_\infty$ outside $\mathcal{C}(r).$
\end{proof}

It is clear that neither $\mathcal{H}_{1}\subset\mathcal{H}_{2}$ nor $\mathcal{H}_{2}\subset\mathcal{H}_{1}.$

Denote by $\Pi_{1}$ and $\Pi_{2}$ the hyperplanes that contain the half-hyperplanes $\mathcal{H}_{1}$ and $\mathcal{H}_{2},$ respectively. Observe that the previous claim implies that $\Pi_{1}$ and $\Pi_{2}$ are parallel. We would like to conclude that $\Pi_1=\Pi_2.$ Assume that the contrary of this is true, i. e. admit that the hyperplanes $\Pi_1$ and $\Pi_2$ are different.

\begin{claim}\label{slablimitation}
$M$ lies in the slab between $\Pi_1$ and $\Pi_2.$
\end{claim}
\begin{proof}[Proof of the Claim \ref{slablimitation}]
Let $S$ be the closed slab limited by $\Pi_1$ and $\Pi_2$ in $\R^{n+1}.$ If  $M\setminus S\neq\varnothing,$ then proceeding as in the first paragraph we could find a hyperplane parallel $\tilde{\Pi}$ to $\Pi_1$ in $\R^{n+1}\setminus S$ so that either $\tilde{\Pi}$ and $M$ have a point of contact or $\dist(\tilde{\Pi},M)=0$. However, arguing as in the first paragraph, and taking in account the behaviour of $M$, we would conclude that both situations are impossible. So $M$ must lie in $S.$ Notice that $M$ does not intersect neither $\Pi_1$ nor $\Pi_2$, by the maximum principle.
\end{proof}

Next, we need to study the behaviour of $M$ inside the solid cylinder $\mathcal{C}(s)$ with respect to the hyperplane $\Pi_1$ and $\Pi_2.$ 
\begin{claim}\label{Cylinder bounded}
For all $s\geq r$ and $j\in\{1,2\}$ we have $\dist(M\cap\mathcal{C}(s),\Pi_j)>0$.
\end{claim}
\begin{proof}[Proof of the Claim \ref{Cylinder bounded}]
Otherwise we could find a sequence $\{p_i=(p_1^i,\ldots,p_{n+1}^i)\}$ in $M\cap\mathcal{C}(s)$ so that \(\dist(p_i,\Pi_j)=0\), so considering the sequence of hypersurfaces \(\{M_i:=M-(0,p_2^i,\ldots,p_{n-1}^i,0,p_{n+1}^i)\}\) by Lemma \ref{remarkcompctness} we would have that $M_i\stackrel{*}{\rightharpoonup} M_\infty$, after passing to a subsequence, where $M_\infty$ is a connected n-dimensional stationary integral varifold. Using that $\{p_i\}$ lies in $\mathcal{C}(s)$ we may also assume $\langle p_i,\ee_1\rangle\to a_1$ and $\langle p_i,\ee_n\rangle\to a_n$. Now $(a_1,0,\ldots,0,a_n,0)\in\spt\ M_\infty\cap \Pi_j$. So by \cite{White}[Theorem 4] we would have $\spt M_\infty=\Pi_j,$ which is impossible because $\Pi_1\neq\Pi_2$ and part of $\spt M_\infty$ is close to $\Pi_1$ and $\Pi_2.$ 
\end{proof}

We know,  thanks to our hypothesis over $M$, that $M\setminus \mathcal{C}(r)={\rm Graph}^{\Pi_1}[\varphi_1]\cup{\rm Graph}^{\Pi_2}[\varphi_2]$, where $\varphi_j:\mathcal{H}_j\to\R$ is a smooth function and it holds
\[
\sup_{\mathcal{H}_j(\delta)}|\varphi_j|<\epsilon\ {\rm and}\ \sup_{\mathcal{H}_j(\delta)}|D\varphi_j|<\epsilon,
\]
where $\delta$ depends on $\epsilon$ and $\delta\to+\infty$ as $\epsilon\to0.$ Fix some $s>r$ and define \[\epsilon:=\frac{1}{10}\min_{j}\{\dist(M\cap\mathcal{C}(s),\Pi_j)\}>0.\] For this choice of $\epsilon$ we take $\delta>0$ so that
\[
\sup_{\mathcal{H}_j(\delta)}|\varphi_j|<\epsilon\ {\rm and}\ \sup_{\mathcal{H}_j(\delta)}|D\varphi_j|<\epsilon.
\]

If we assume that $\Pi_1\neq\Pi_2,$ then these choices lead us to a contradiction as follows: let $\nu$ be the unit normal vector to $\Pi_{1}$ pointing outside $S$ and define $s_{0}=\dist(\Pi_1,\Pi_2)>0$. Notice that for this choice of $s_0$ we have that $M+s_{0}\nu$ does not intersect $S,$ but the wing of $M+s_0\nu$ corresponding to $\mathcal{H}_2+s_0\nu$ is asymptotic a half-hyperplane on $\Pi_1$ with the unit inward pointing normal to its boundary is $-w_1$.

Define $M_\epsilon:=\{x\in M\colon \min\{\dist(x,\Pi_1),\dist(x,\Pi_2)\}\geq\epsilon\}$. By Claim \ref{Cylinder bounded} one has $M\cap\mathcal{C}(s)\subset M_\epsilon.$ Notice that, taking $t_0>0$ sufficiently large $t_{0}>0$, we may assume that $M_\epsilon+s_{0}\nu+t_{0}w_{1}$  lies in $\mathcal{Z}_{1,2\delta}^{+}$ and $\mathcal{C}(s)\cap(\mathcal{C}(s)+s_{0}\nu+t_{0}w_{1})=\varnothing,$ where $\mathcal{Z}_{1,2\delta}$ denotes the half-space in $\R^{n+1}$ that contains $\mathcal{H}_1(2\delta)$, $\p \mathcal{Z}_{1,2\delta}$ has unit inward pointing normal $w_1$ and $\p\mathcal{H}_1 \subset\p \mathcal{Z}_{1,2\delta}$.

Define the set \(\displaystyle{\mathcal{A}:=\{s\in[0,s_0]\colon (M+s\nu+t_{0}w_{1})\cap M=\varnothing\}},\) and let $s_{1}:=\inf\mathcal{A}>0.$ Notice that from our assumptions on $s_0$ and $\epsilon$ we have $s_1>0$. We have two possibilities for $s_1$: either $s_{1}\notin\mathcal{A}$ or $s_{1}\in\mathcal{A}$. The first case implies that $M+s_{1}\nu+t_{0}w_{1}$ and $M$ have points of contact, which is impossible by the maximum principle and our hypothesis over $M.$ Consequently it holds $s_{1}\in\mathcal{A},$ and so \(\displaystyle{\dist\left(M+s_{1}\nu+t_{0}w_{1},M\right)}=0\) and $\textstyle{\{M+s_{1}\nu+t_{0}w_{1}\}\cap M=\varnothing}.$ This fact together with our choice of $\epsilon$ imply that there exist sequences $\{p_{i}\}$ in $M\setminus \mathcal{C}(s)$ and $\{q_{i}\}$ in $(M\setminus\mathcal{C}(s))+s_{1}\nu+t_{0}w_{1}$ such that 
$\dist(p_i,\mathcal{C}(s)\cap M)>2\epsilon,$ $\dist(q_i,\mathcal{C}(s)\cap M)>2\epsilon,$ $\dist(p_i,\mathcal{C}(s)\cap M+s_{1}\nu+t_{0}w_{1})>2\epsilon,$ $\dist(q_i,\mathcal{C}(s)\cap M+s_{1}\nu+t_{0}w_{1})>2\epsilon$ and
$\dist(p_{i},q_{i})=0.$ Observe that we can assume $\{\langle q_{i},\ee_{1}\rangle\}, \{\langle p_{i},\ee_{1}\rangle\}\to a$ and $\{\langle q_{i},\ee_{n}\rangle\}, \{\langle p_{i},\ee_{n}\rangle\}\to b$. 

In $\R^{n+1}\setminus(\mathcal{C}(s)\cup\mathcal{C}(s)+s_{1}\nu+t_{0}w_{1})$ consider the following sequences \[\displaystyle{\{M_{i}:=(M^1\setminus\mathcal{C}(s))-(0,p_{2},\ldots,p_{n-1},0,p_{n+1})\}}\] and \[\displaystyle{\{\widehat{M}_{i}:=(M^2\setminus\mathcal{C}(s))+s_{1}\nu+t_{0}w_{1}-(0,q_{2},\ldots,q_{n-1},0,q_{n+1})\}},\] where $M_i$ indicates the wing of $M$ which is asymptotic to $\mathcal{H}_i$. In particular, each $M_i$ and $\widehat{M}_i$ are graphs over open half-hyperplanes in $\mathcal{H}_1$. Hence, they are stable hypersurfaces and the sequences $\{M_i\}$ and $\{\widehat{M}_i\}$ have locally bounded area by Proposition \ref{homology-general.}. 

Now \cite{Wickramasekera}[Theorem 18.1] implies that, after passing to a subsequence, $M_{i}\rightharpoonup M_{\infty}$ and $\widehat{M}_{i}\rightharpoonup\widehat{M}_{\infty}$ in $\R^{n+1}\setminus(\mathcal{C}(s)\cup\mathcal{C}(s)+s_{1}\nu+t_{0}w_{1}),$ where $M_{\infty}$ and $\widehat{M}_{\infty}$ are connected stable integral varifold so that ${\rm sing}\ M_\infty$ and ${\rm sing}\ \widehat{M}_\infty$ satisfy $\mathcal{H}^{n-7}({\rm sing}\ M_\infty)=\mathcal{H}^{n-7}({\rm sing}\ \widehat{M}_\infty)=0$ in $\R^{n+1}\setminus(\mathcal{C}(s)\cup\mathcal{C}(s)+s_{1}\nu+t_{0}w_{1})$, and $(a,0,\ldots,0,b,0)\in\spt M_{\infty}\cap\spt \widehat{M}_{\infty}$ by \cite{Simon}[Corollary 17.8]. The connectedness of the support can be obtained as in \cite{Gama-Martin}[Lemma 3.1]. 

On the other hand, \cite{Ilmanen-maximum}[Theorem A (ii)] implies that ${\rm reg}\ M_{\infty}$ and ${\rm reg}\ \widehat{M}_{\infty}$ are connected subsets in $\R^{n+1}\setminus(\mathcal{C}(s)\cup\mathcal{C}(s)+s_{1}\nu+t_{0}w_{1})$. Consequently, the asymptotic behaviour of $\spt M_{\infty}$ and $\spt \widehat{M}_{\infty}$ imply that ${\rm reg}\ M_{\infty}$ does not intersect ${\rm reg}\ \widehat{M}_{\infty}$. Thus, one has $\spt M_{\infty}\cap\spt \widehat{M}_{\infty}\subset{\rm sing}\ M_{\infty}\cup{\rm sing}\ \widehat{M}_{\infty},$ and so, we would have $\mathcal{H}^{n-1}(\spt M_{\infty}\cap\spt \widehat{M}_{\infty})=0,$ so \cite{Wickramasekera}[Theorem 19.1 (a)] implies that $\spt M_{\infty}\cap\spt \widehat{M}_{\infty}=\varnothing$, which is impossible since $(a,0,\ldots,0,b,0)\in\spt M_{\infty}\cap\spt \widehat{M}_{\infty}.$ Therefore, we must have $\Pi_1=\Pi_2$. However, if we proceed as in Claim \ref{slablimitation} we conclude $M=\Pi_1$. This concludes the proof of the theorem. 
\end{proof}

\bibliographystyle{amsplain, amsalpha}

\end{document}